\newcommand{\weak}{\overset{*}{\rightharpoonup}}
\DeclareMathOperator{\id}{id}
\newcommand{\R}{\mathbb{R}}
\newcommand{\Z}{\mathbb{Z}}
\newcommand{\T}{\mathbb{T}}
\newcommand{\E}{\mathds{E}}
\newcommand{\N}{\mathbb{N}}
\newcommand{\D}{\mathbb{D}}
\DeclareMathOperator{\cof}{cof}
\DeclareMathOperator{\dx}{dx}
\DeclareMathOperator{\dy}{dy}
\DeclareMathOperator{\dv}{div}
\DeclareMathOperator{\tr}{tr}
\DeclareMathOperator{\spt}{supp}
\DeclareMathOperator{\Sym}{Sym}
\theoremstyle{plain}
\newtheorem*{NTeo}{Theorem}
\newtheorem*{NProp}{Proposition}
\newtheorem{Teo}{Theorem}
\newtheorem{prop}[Teo]{Proposition}
\newtheorem{Cor}[Teo]{Corollary}
\theoremstyle{definition}
\theoremstyle{remark}
\newtheorem{rem}{Remark}
\title{On the upper semicontinuity of a quasiconcave functional}
\author{Luigi De Rosa}
\address{L.D.R.: EPFL SB, Station 8, CH-1015 Lausanne, Switzerland}
\email{luigi.derosa@epfl.ch}
\author{Denis Serre}
\address{D.S.: UMPA, ENS-Lyon, allée d'Italie, 69364 Lyon Cedex 07, France}
\email{denis.serre@ens-lyon.fr}
\author{Riccardo Tione}
\address{R.T.: Institut f\"ur Mathematik, Universit\"at Z\"urich, Winterthurerstrasse 190, CH-8057 Zurich, Switzerland}
\email{riccardo.tione@math.uzh.ch}
\begin{document}

\maketitle

\begin{abstract}
In the recent paper \cite{SER}, the second author proved a divergence-quasiconcavity inequality for the following functional $ \mathbb{D}(A)=\int_{\T^n} \det(A(x))^{\frac{1}{n-1}}\dx$ defined on the space of $p$-summable positive definite matrices with zero divergence. We prove that this implies the weak  upper semicontinuity of the functional $\D(\cdot)$ if and only if $p>\frac{n}{n-1}$.
\end{abstract}
\par
\medskip\noindent
\textbf{Keywords:} Matrix-fields, determinants, quasiconcavity, upper semi-continuity.
\par
\medskip\noindent
{\sc MSC (2010): 26B25, 39B42, 39B62, 49N60.
\par
}
\section{Introduction}
In this paper, we study the functional 
$$
\D(A)=\int_{\T^n}\det(A(x))^\frac{1}{n-1}\dx
$$
defined on the space
$$
X_p=\{ A\in L^p(\T^n,\Sym^+(n)) \, : \, \dv A\in \mathcal{M}(\T^n, \R^n)\},
$$
where $\T^n$ is the $n$-dimensional torus of $\R^n$, $\Sym^+(n)$ is the space of symmetric $n\times n$ non-negative definite matrices, and $\mathcal{M}(\T^n, \R^n)$ is the space of bounded Radon measures on $\T^n$ with values in $\R^n$.\\

In the recent paper \cite{SER}, the second author proved that the functional is well defined on $ X_1$, meaning that for any $A\in X_1$ the function $\det(A)^\frac{1}{n-1} \in L^1(\R^n/\Gamma)$. More precisely, he proved the following (here we state the divergence-free version since it will be useful for our later discussion, but see also \cite[Theorems 2.2, 2.3, 2.4]{SER} for more general results and \cite{SER2}[Theorem 2.1] for an improvement)

\begin{NTeo} Let the divergence-free, non-negative definite matrix field $x \mapsto A(x)$ be $\Gamma$-periodic, with $A \in L^1(\R^n/ \Gamma)$. Then
\[
\det(A)^\frac{1}{n-1} \in L^1(\R^n/\Gamma)
\]
and there holds
\begin{equation}\label{quasiconcave}
\fint_{\R^n/\Gamma}\det(A(x))^{\frac{1}{n - 1}}\dx \le \det\left(\fint_{\R^n/\Gamma} A(x)\dx\right)^{\frac{1}{n - 1}}.
\end{equation}
\end{NTeo}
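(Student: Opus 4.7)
The plan is to exploit the divergence-free constraint through a plane-wave analysis, reducing the pointwise estimate to a Schur complement computation where Minkowski's determinant inequality in dimension $n-1$ makes the relevant map concave. First I would regularize to reach a strictly positive, smooth setting: set $A_\varepsilon := \rho_\varepsilon \ast A + \varepsilon I$ with $\rho_\varepsilon$ a standard mollifier on $\T^n$. Convolution commutes with $\dv$, so $\dv A_\varepsilon = \rho_\varepsilon \ast (\dv A) = 0$; strict positivity is automatic thanks to $\varepsilon I$. After a constant linear change of coordinates on $\T^n$ one may further normalize $\bar A := \fint A\,dx = I$. Once the inequality and the integrability are shown for the regularized fields, passing $\varepsilon \to 0$ via Fatou transfers both conclusions to the general case.

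For the pointwise step, consider first a single plane-wave perturbation $A(x) = \bar A + H \cos(2\pi k\cdot x)$ with $H \in \Sym(n)$ satisfying the Fourier form of the divergence-free condition, $Hk = 0$. Rotate coordinates so that $k$ is parallel to $e_n$; then $H$ has vanishing last row and column, so $H = \begin{pmatrix} H' & 0 \\ 0 & 0 \end{pmatrix}$ with $H' \in \Sym(n-1)$. Writing $\bar A = \begin{pmatrix} P & q \\ q^T & r \end{pmatrix}$, the Schur complement identity yields
\[
\det(\bar A + H\cos\theta) = r \cdot \det(M + H'\cos\theta), \qquad M := P - qq^T/r \in \Sym^+(n-1).
\]
The crucial point is that Minkowski's determinant inequality says $X \mapsto \det(X)^{1/(n-1)}$ is concave on $\Sym^+(n-1)$, so Jensen applied to the distribution of $\cos\theta$ on $[0,1]$ gives
\[
\fint_0^1 \det(\bar A + H\cos(2\pi t))^{1/(n-1)}\,dt \le r^{1/(n-1)}\det(M)^{1/(n-1)} = \det(\bar A)^{1/(n-1)}.
\]
This settles the inequality against any single plane wave in the wave cone $\{H \in \Sym(n) : \exists k \ne 0,\ Hk = 0\}$.

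The main obstacle is passing from a single plane wave to an arbitrary divergence-free field: this is exactly the question of whether $\mathcal A$-rank-one concavity (which the previous step establishes) upgrades to full $\mathcal A$-quasiconcavity, and in general that implication fails, so additional structure must be used. I would attempt two complementary strategies. The first is an iterated lamination / induction on the number of active Fourier modes, repeatedly applying the plane-wave estimate to ``peel off'' one mode at a time while controlling the concavity deficit. The second is a dual representation: search for an identity $\det(X)^{1/(n-1)} = \inf_{Y} \ell_Y(X)$ with $\ell_Y$ linear in $X$, so that integrating against a div-free $A$ with mean $\bar A$ collapses the left-hand side via $\fint \ell_Y(A)\,dx = \ell_Y(\bar A)$; the cofactor $\cof A$ (which itself carries a natural divergence-free structure, being a Piola-type object) is the obvious candidate for the dual variable $Y$. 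Once the inequality is secured for bounded $A$, the integrability claim $\det(A)^{1/(n-1)} \in L^1$ for the merely $L^1$-bounded case follows by truncation and Fatou.
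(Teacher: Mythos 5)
Your regularization step is sound (mollification on $\T^n$ preserves the divergence-free condition, so reducing to smooth fields with $A\ge\varepsilon\id$ and recovering the $L^1$ claim by Fatou is fine — note that the truncation you mention at the end would instead destroy $\dv A=0$), and the plane-wave/Schur-complement computation correctly verifies concavity of $\det(\cdot)^{1/(n-1)}$ along wave-cone directions $\{H\in\Sym(n):Hk=0\}$; this is exactly why the exponent $\tfrac1{n-1}$ is the right one. But that is only the necessary ``rank-one'' condition, and, as you yourself observe, it does not yield the integral inequality for a general divergence-free field. That main step is left as two unexecuted strategies, so the theorem is not proved. Moreover, the second strategy as stated cannot work: a representation $\det(X)^{1/(n-1)}=\inf_Y\ell_Y(X)$ with each $\ell_Y$ linear (or affine) in $X$ would force $X\mapsto\det(X)^{1/(n-1)}$ to be concave on $\Sym^+(n)$, which is false — along the ray $t\mapsto t\id$ it equals $t^{n/(n-1)}$, strictly convex; only the exponent $\tfrac1n$ admits such a pointwise linear dual formula ($\det(M)^{1/n}=\min\{\tfrac1n\tr(MN):\ N>0,\ \det N=1\}$). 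The first strategy (iterated lamination over Fourier modes) is precisely the rank-one-to-quasiconcave implication you flagged as false in general, and no mechanism special to this functional is supplied to rescue it.

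The actual proof in \cite{SER} (reproduced in Step 2 of the proof of Theorem \ref{t_main_sup}) fixes this by making the dual variable $x$-dependent and choosing it through a Monge--Amp\`ere equation on the torus: writing $f\doteq\det(A)^{1/(n-1)}$ and $\bar A\doteq\fint_{\T^n}A\,\dx$, one solves $\det(H\phi+S)=f$ with $\phi$ periodic, $H\phi+S>0$, and $S>0$ constant subject to the compatibility condition $\det S=\int_{\T^n}f\,\dx$, which is solvable by \cite[Theorem 2.2]{YAN}. Since $(H\phi+S)A$ is a product of positive definite symmetric matrices, its eigenvalues are positive, and the arithmetic--geometric mean inequality gives $f=\det\bigl((H\phi+S)A\bigr)^{1/n}\le\tfrac1n\tr\bigl((H\phi+S)A\bigr)$ pointwise. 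Integrating, the term $\int_{\T^n}\tr(H\phi\,A)\,\dx=\int_{\T^n}\bigl(\dv(A\nabla\phi)-(\dv A,\nabla\phi)\bigr)\dx$ vanishes by periodicity and $\dv A=0$, and the choice $S=\lambda\cof(\bar A)$ with $\lambda$ fixed by the constraint yields $\int_{\T^n}f\,\dx\le\bigl(\int_{\T^n}f\,\dx\bigr)^{1/n}\det(\bar A)^{1/n}$, i.e.\ \eqref{quasiconcave}. This global duality-through-a-PDE step is the missing idea; your cofactor intuition points toward the right ansatz for $S$, but without solving the Monge--Amp\`ere equation the central inequality remains unproven.
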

In the previous result, $\Gamma \subset \R^n$ is a lattice. For simplicity, in the sequel we will just consider $\Gamma = \Z^n$, hence $\R^n/\Gamma = \T^n$. 
Note that inequality \eqref{quasiconcave} is a generalized Jensen inequality for non-concave functions, and can be viewed as a divergence-quasiconcavity property. Let us explain the link between quasiconcavity and upper semi-continuity of the related functional by considering the dual of these objects, namely quasiconvexity and lower-semicontinuity, that have received much more attention in the literature. We will use as a domain the $n$-dimensional torus $\T^n$ simply because it is the domain we will use throughout the paper, but more generally one could consider any $\Omega \subset \R^n$ with $|\partial \Omega| = 0$. The general question one poses is the following: given a continuous integrand $f:\R^N \to \R$ with growth
\begin{equation}\label{growth}
|f(z)| \le C(1 + \|z\|^p),
\end{equation}
under which conditions is the functional
\[
\E(z) \doteq \int_{\T^n}f(z(x))\dx 
\]
defined, for instance, for $z \in L^q(\T^n,\R^N)$, $ q \le p$, sequentially weakly lower semi-continuous? The first example of such problem was studied by C.B. Morrey in the case in which $N = m\times n$ $z(x) = \nabla u(x)$, where $u: \T^n \to \R^m$ is a $W^{1,q}$ function. In \cite{MORB}, he introduced the notion of \textit{quasiconvexity}, that is:
\begin{equation}\label{quasiconv}
f(A) \le \int_{\T^n}f(A + \nabla \phi(x))\dx, \qquad \forall \phi \in C^\infty(\T^n,\R^m), \forall A \in \R^{m\times n}.
\end{equation}
It can be proved that \eqref{growth} and \eqref{quasiconv} imply the weak lower semi-continuity of the functional $\E(\cdot)$, when $q < p$. More generally, one is interested, as we do in the present paper, in maps $z: \T^n \to \R^N$ satisfying more general constraints than $z(x) = \nabla u(x)$. The general framework, considered for instance in \cite{FM,FM1}, consists in taking a differential operator of order $k$ with smooth coefficients, usually denoted by $\mathscr{A}$, of the form
\[
\mathscr{A} \doteq \sum_{|\alpha|\le k} A_{\alpha}\partial_\alpha,\quad  \; A_\alpha \in C^\infty(\T^n,\R^{\ell \times N}).
\]
In \cite{FM} it is proved that $f$ is weakly lower-semicontinuous on $L^q(\T^n,\R^N)\cap\ker(\mathscr{A})$, $q < p$, provided that $\mathscr{A}$ satisfies Murat's constant rank condition (see \cite{FM} or \cite{MURCOM} for the definition), $f$ satisfies $\eqref{growth}$ and is $\mathscr{A}$-quasiconvex, in the sense that
\[
f(A) \le \int_{\T^n}f(A + z(x))\dx, \qquad \forall A \in \R^{N}, \forall z \in C^\infty(\T^n,\R^m) \text{ with } \mathscr{A}z = 0.
\]
The main ingredients of the proof of \cite{FM} are suitable projections of functions $z \in L^q(\T^n,\R^N)$ onto $\ker(\mathscr{A})$, and, similarly to the classical work \cite{KIP}, homogenization for Young measure (that will be introduced later on in the paper). In the last years, also due to the introduction of new techniques and concepts in the theory of Young Measures, see \cite{AB,KR,DM}, and a better understanding of the singular part of measures $\mu \in \mathcal{M}(\T^n,\R^N)$ with $\mathscr{A}\mu = 0$, see \cite{GUIANN,DIM}, there has been much progress in the study of lower-semicontinuity or relaxation of functionals, see \cite{KR,REL,GR} and the references therein.
\\
\\
As said, our paper studies upper-semicontinuity properties of the functional $\D(\cdot)$. We define a topology on $X_p$ by saying that, if $A_k,A \in X_p$, $A_k\rightharpoonup A$ in $X_p$ if $A_k \rightharpoonup A$ in $L^p$ ($A_k \overset{*}{\rightharpoonup} A$ if $p = \infty$) and $\dv A_k \overset{*}{\rightharpoonup} \dv A$ in $\mathcal{M}(\T^n,\R^n)$. The main result, contained in Section \ref{mainS}, is the following
\begin{NTeo}
Let $p > \frac{n}{n - 1}$ and $\{A_k\}_k\subset X_p$ be such that $A_k \rightharpoonup A$ in $X_p$. Then we have
$$
\limsup_k \D (A_k)\leq \D(A).
$$
\end{NTeo}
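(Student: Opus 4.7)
The approach is to extract an $L^p$-Young measure from $\{A_k\}$ and combine it with the divergence-quasiconcavity inequality \eqref{quasiconcave}, in the spirit of Fonseca--M\"{u}ller's characterization of $\mathscr{A}$-free Young measures. The condition $p>\frac{n}{n-1}$ is the natural threshold: the integrand $M\mapsto\det(M)^{\frac{1}{n-1}}$ is $\frac{n}{n-1}$-homogeneous and satisfies $\det(M)^{\frac{1}{n-1}}\le c|M|^{\frac{n}{n-1}}$ on $\Sym^+(n)$, so boundedness of $\{A_k\}$ in $L^p$ forces $\{\det(A_k)^{\frac{1}{n-1}}\}$ to be bounded in $L^{\frac{p(n-1)}{n}}$ with exponent strictly greater than $1$, and therefore equi-integrable. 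This rules out $L^1$-concentration in the limit.

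After extracting a subsequence, let $\nu=(\nu_x)_x$ be the Young measure generated by $\{A_k\}$; it is supported in $\Sym^+(n)$ and has barycenter $A(x)$ almost everywhere. Equi-integrability yields
\[
\lim_{k\to\infty} \D(A_k)=\int_{\T^n}\bigl\langle\nu_x,\det\nolimits^{\frac{1}{n-1}}\bigr\rangle\dx,
\]
so the theorem reduces to the pointwise Jensen-type inequality
\begin{equation}\label{plan:jensen}
\bigl\langle\nu_x,\det\nolimits^{\frac{1}{n-1}}\bigr\rangle\le \det(A(x))^{\frac{1}{n-1}}\qquad\text{for a.e.\ }x\in\T^n,
\end{equation}
which is the divergence-quasiconcave analogue of Jensen's inequality at the level of the generated Young measure.

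To establish \eqref{plan:jensen} I localize at a point $x_0$ which is simultaneously a Lebesgue point of $A$ and of $x\mapsto\langle\nu_x,\det^{\frac{1}{n-1}}\rangle$, and a Radon--Nikodym regular point for the limit of the total variations $|\dv A_k|$. A blow-up together with a standard diagonal extraction produces a sequence $B_j$ on $\T^n$ with $B_j\rightharpoonup A(x_0)$ in $L^p$, generating the constant Young measure $\nu_{x_0}$, and with $\|\dv B_j\|_{\mathcal{M}(\T^n)}\to 0$. The core step is then to correct $B_j$ into a $\Z^n$-periodic, $\Sym^+(n)$-valued, divergence-free field $\tilde B_j$ generating the same Young measure: the divergence is eliminated by a symmetric Leray-type projection whose corrector $B_j-\tilde B_j$ is bounded in $L^q$ for every $q<\frac{n}{n-1}$ by $\|\dv B_j\|_{\mathcal{M}}$ through elliptic regularity for measure data. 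Interpolating with the $L^p$-bound of $B_j$, the corrector tends to $0$ in some intermediate $L^{p_0}$ with $p_0\in(\frac{n}{n-1},p)$, which suffices both for $\tilde B_j$ to generate $\nu_{x_0}$ and for $\{\det(\tilde B_j)^{\frac{1}{n-1}}\}$ to remain equi-integrable. Positive semi-definiteness is restored by an additional small perturbation preserving both the divergence-free constraint and the limiting Young measure. Applying the inequality \eqref{quasiconcave} to the resulting $\tilde B_j$ and letting $j\to\infty$ yields \eqref{plan:jensen}.

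The central technical obstacle is precisely this correction: one must simultaneously kill a divergence controlled only in total variation, preserve positive semi-definiteness, and leave the Young measure invariant. The strict inequality $p>\frac{n}{n-1}$ enters in an essential, quantitative way by providing the interpolation slack needed to make the corrector negligible against the $\frac{n}{n-1}$-homogeneity of $\det^{\frac{1}{n-1}}$; at the critical exponent this slack disappears, consistent with the necessity of $p>\frac{n}{n-1}$ announced in the abstract.
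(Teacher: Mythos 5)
Your reduction to the pointwise Jensen-type inequality $\langle\nu_x,\det(\cdot)^{\frac{1}{n-1}}\rangle\le\det(A(x))^{\frac{1}{n-1}}$, and the blow-up at a simultaneous Lebesgue/regular point, coincide with the paper's strategy (this is exactly \eqref{FM_ineq} there). But the step you yourself flag as the core of the argument --- correcting the blown-up sequence $B_j$ into a divergence-free, $\Sym^+(n)$-valued field $\tilde B_j$ generating the same Young measure, so that \eqref{quasiconcave} can be applied directly --- is a genuine gap, and it is precisely the obstruction that makes the Fonseca--M\"uller projection route fail here. The corrector you build from $\dv B_j$, which is only controlled in total variation, obeys elliptic estimates for measure data and hence is bounded (with small norm) only in $L^q$ for $q<\frac{n}{n-1}$, with no bound in any $L^r$, $r>\frac{n}{n-1}$; so the claimed interpolation ``with the $L^p$-bound of $B_j$'' to get smallness in some $L^{p_0}$, $p_0>\frac{n}{n-1}$, has no basis (the $L^p$ bound is on $B_j$, not on the corrector). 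More seriously, the corrector is in no way small pointwise, so after subtracting it the field can fail to be non-negative definite on a set where its negative part is large; you cannot restore positivity by adding a small multiple of $\id$ (the negative part is only small in $L^q$, not in $L^\infty$), and truncating the corrector destroys the divergence-free constraint. Since \eqref{quasiconcave} is only available for non-negative definite divergence-free fields, the ``additional small perturbation preserving both the divergence-free constraint and the limiting Young measure'' is exactly the nonlinear projection onto a convex cone whose non-existence the paper cites as the reason for abandoning the projection method.

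The paper circumvents this by never making the localized field divergence-free. It sets $B_{k,R}(x)=\varphi(x)A_k(a+Rx)+(1-\varphi(x))A(a)$, which is automatically $\Sym^+(n)$-valued and periodic, and then reruns the proof of \eqref{quasiconcave} itself (the Monge--Amp\`ere equation $\det(H\psi_{k,R})=\det(B_{k,R})^{\frac{1}{n-1}}$ plus the arithmetic--geometric mean inequality) keeping the divergence of $B_{k,R}$ as an explicit error term. That error is then shown to vanish in the iterated limit $k\to\infty$, $R\to 0$, using the total-variation bound on $\dv A_k$ at points of vanishing density of the singular part, together with the Lipschitz estimate $\|\nabla\phi_{k,R}\|_{L^\infty}\le C\|S_{k,R}\|$ and compactness of the potentials. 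If you want to salvage your route, you would have to supply a substitute for this: either a genuinely nonlinear projection onto divergence-free $\Sym^+(n)$-valued fields with Young-measure control, or, as the paper does, a version of the quasiconcavity argument that tolerates a small divergence.
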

The method used to prove this result differs from the one of \cite{FM}, in that we do not use projections on the Fourier coefficients, but instead we use an homogenization argument combined with the strategy developed in \cite{SER}. The main difficulty in applying the techniques of \cite{FM} stems from the fact that our objects have image in a convex subset, $\Sym^+(n)$, of a vector space, hence the resulting projectors would not be linear. In Section \ref{counterS}, we show the optimality of the assumption $p > \frac{n}{n - 1}$ in the following

\begin{NProp}
For every $\varepsilon > 0$ and for every $x_0 \in \R^n$, there exists a sequence of matrix fields $A_k$ such that
\begin{enumerate}[(i)]
\item $A_k$ is compactly supported in $B_\varepsilon(x_0)$ for every $  k \in \N$; \label{support}
\item $A_k \rightharpoonup 0$ in $L^{\frac{n}{n - 1}}(\R^n,\Sym^+(n))$ and strongly in $L^p(\R^n,\Sym^+(n))$, $\forall p < \frac{n}{n - 1}$; \label{convergence}
\item $\dv(A_k) \in \mathcal{M}(\T^n,\R^n)$ for every $k$ and $\sup_{k \in \N}\|\dv(A_k)\|_{\mathcal{M}(\T^n,\R^n)} \le 1$; \label{divergence}
\item $\D(A_k) = \omega_n$ for every $ k$, so that in particular $\D(0) = 0 < \limsup_k\D(A_k)$.\label{determinant}
\end{enumerate}
\end{NProp}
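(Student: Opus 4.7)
The plan is to build an explicit concentrating sequence around $x_0$. For $0<r_k<\varepsilon$ with $r_k\downarrow 0$, I would try the simplest possible shape, namely a multiple of the identity on a shrinking ball:
\[
A_k(x) \doteq \lambda_k \chi_{B_{r_k}(x_0)}(x)\, \Id, \qquad \lambda_k \doteq c_0\, r_k^{-(n-1)},
\]
with $c_0 > 0$ a constant to be fixed at the end. The exponent $-(n-1)$ on $r_k$ is forced by scaling: it is the unique one making $\D(A_k) = \lambda_k^{n/(n-1)}|B_{r_k}(x_0)|$ constant in $k$, and the same scaling is critical for the $L^{n/(n-1)}$ norm.

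The verification then splits into four short computations. Item (i) is immediate, and (iv) follows from the identity $\D(A_k) = c_0^{n/(n-1)}\omega_n$ after choosing $c_0$. For (iii), since $B_{r_k}(x_0)$ has finite perimeter, $\dv A_k$ is a vector-valued measure supported on $\partial B_{r_k}(x_0)$, of total mass $\lambda_k \cdot n\omega_n r_k^{n-1} = c_0 n\omega_n$, again uniform in $k$. For (ii), a direct computation yields $\|A_k\|_{L^p}^p \simeq r_k^{\,n-p(n-1)}$, so one obtains strong convergence $A_k \to 0$ in $L^p$ for $p<n/(n-1)$, and uniform boundedness (but no strong convergence) at the critical exponent $p = n/(n-1)$. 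Combined with the strong $L^1$ convergence (the case $p=1$), this forces $A_k \rightharpoonup 0$ in $L^{n/(n-1)}$, since any weak subsequential limit there must coincide with the distributional limit, which is zero.

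I do not foresee a serious obstacle: the argument reduces to bookkeeping with the scale $\lambda_k$ together with elementary volume and surface-area formulae for balls. The conceptual point is that the three scaling computations---for $\D(A_k)$, for $\|A_k\|_{L^p}$, and for $\|\dv A_k\|_{\mathcal{M}}$---are simultaneously scale-invariant in $r_k$ exactly when $p = n/(n-1)$, which explains the sharpness of the exponent in the main theorem. The only mild subtlety is that this construction rigidly ties the constants in (iii) and (iv) (one gets $\|\dv A_k\|_{\mathcal M} = c_0 n\omega_n$ and $\D(A_k) = c_0^{n/(n-1)}\omega_n$); if exact matching of both stated normalisations is required, one may replace $\chi_{B_{r_k}(x_0)}$ by a slightly reshaped smooth bump of the same support, but the substance of the counterexample is unchanged.
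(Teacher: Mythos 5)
Your construction is essentially identical to the paper's proof: there one takes $A_k = 2^{k(n-1)}\chi_{B_{2^{-k}}(x_0)}\Id$, i.e.\ your $r_k = 2^{-k}$, $c_0 = 1$, and verifies (i)--(iv) by the same support, Hölder/scaling, divergence-theorem and volume computations (the only cosmetic difference being that the paper checks the weak $L^{n/(n-1)}$ convergence by testing directly against $L^n$ functions, while you deduce it from the uniform bound plus strong $L^1$ convergence). Your closing remark about the constants is apt: with $c_0=1$ the divergence has total mass $n\omega_n$ rather than $\le 1$, a normalization the paper's own proof also glosses over, but this is immaterial to the counterexample since only boundedness of $\|\dv A_k\|_{\mathcal{M}}$ and $\D(A_k)$ staying away from $0$ are needed.
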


We now introduce the notation and we state some useful and well-known results.

\subsection{Notation and technical preliminaries}

We will denote with $\T^n$ the $n$-dimensional torus of $\R^n$, that is defined as $\R^n/\Z^n$. We identify $\T^n$ with $[0,1]^n$, so that $|\T^n| = 1$, where $|E|$ denotes the $n$-dimensional Lebesgue measure of the Borel set $E\subset \R^n$. Moreover, we see every function $f: \T^n \to \R^m$ as a $\Z^n$-periodic function defined on $\R^n$, i.e. $f(x+ z) = f(x),\forall x \in \R^n, z \in \Z^n$. We denote by $\mathcal{M}(\T^n,\R^m)$ the space of bounded Radon measures with values in $\R^m$. When $m =1$, we denote this space by $\mathcal{M}(\T^n)$, and the space of positive Radon measures by $\mathcal{M}_+(\T^n)$. We recall that this is a normed space, where the norm is given by $$\|\mu\|_{\mathcal{M}(\T^n,\R^m)} \doteq \sup_{\Phi \in C^0(\T^n,\R^m),\|\Phi\|_{\infty}\le 1}\mu(\Phi),$$
and the weak-star convergence of $\mu_k \in \mathcal{M}(\T^n,\R^m)$ to $\mu \in \mathcal{M}(\T^n,\R^m)$ is given by
\[
\mu_k \weak \mu  \Leftrightarrow \mu_k(\Phi) \to \mu(\Phi), \, \, \forall \Phi \in C^0(\T^n,\R^m).
\]
Since $\mathcal{M}(\T^n,\R^m)$ is the dual of $C^0(\T^n,\R^m)$ that is a separable space, we have sequential weak-$*$ compactness for equibounded sequences $\mu_k \in \mathcal{M}(\T^n,\R^m)$ (see \cite[Section 1.9]{EVG}).\\
For every $\mu \in \mathcal{M}(\T^n,\R^m)$, we consider its Lebesgue decomposition
\[
\mu = g\dx + \mu^s,
\]
where $g \in L^1(\T^n,\R^m)$ and $\mu^s \in \mathcal{M}(\T^n,\R^m)$ denotes a singular measure with respect to the Lebesgue measure, i.e. there exists a set $A\subset \T^n$ such that $|A| = 0$ and $$\mu^s(E) = \mu^s(A\cap E), \quad \text{for every Borel set } E \subset \T^n.$$
We recall that a Lebesgue point for a function $g \in L^1(\T^n,\R^m)$ is a point $x$ such that
\[
\fint_{B_r(x)}\left|g(y) - g(x)\right|dy \to 0\, \,  \text{ as } r \to 0^+,
\]
where $$\fint_{E} f(y)\dy = \frac{1}{|E|}\int_E f(y)\dy,$$ for every $f \in L^1(\R^n)$, $ E$ Borel subset of $\R^n$  with $|E|>0$. It is well know that the set of Lebesgue points of such a function $g$ are of full measure in $\R^n$ (see \cite[Theorem 1.33]{EVG}). More generally, if $\mu \in \mathcal{M}_+(\T^n)$ or $\mathcal{M}_+(\R^n)$, we call its (upper) density the function
\[
D\mu(x) \doteq \limsup_{r \to 0^+}\frac{\mu(B_r(x))}{\omega_nr^n},
\]
where $\omega_n \doteq |B_1(0)|.$ We will use the fact that, if $\mu$ is singular with respect to the Lebesgue measure, then $D\mu(x) = 0$ for a.e. point of $\T^n$ (see \cite[Theorem 1.31]{EVG}).\\
 For symmetric matrices $A,B \in \Sym^+(n)$, we use the standard notation
\[
A \ge B
\]
to denote the partial order relation
\[
(Av,v) \ge (Bv,v),\quad  \forall v \in \R^n.
\]
Recall the basic monotonicity property of the determinant
\[
A \ge B \Rightarrow \det(A) \ge \det(B).
\]
For a matrix $A$, we denote with $P_A(\lambda)$ its characteristic polynomial, i.e.
\[
P_A(\lambda) \doteq \det(\lambda\id - A).
\]
Let us define, for a matrix $A \in \Sym^+(n)$ with eigenvalues $\lambda_1,\dots,\lambda_n$,
\[
M_i(A) \doteq \sum_{1\le j_1\le\dots\le j_i \le n}\lambda_{j_1}\dots\lambda_{j_i}, \quad \forall i \in \{1,\dots, n\},\; M_0(A) \doteq 1.
\]
It is a basic Linear Algebra fact that, if $0 \le i \le n$ the $i$-th coefficient of $P_A(\lambda)$ is given by $(-1)^{i + n}M_{n - i}(A)$. Notice in particular that $M_n(A) = \det(A)$. In the proof of Theorem \ref{t_main_sup}, we will need the following result (see \cite[Section 3.1]{DMU}):

\begin{Teo}[Fundamental Theorem on Young measure]\label{YoungT}
Let $E\subset \R^n$ be a Lebesgue measurable set with finite measure. Consider a sequence $z_k:E\subset \R^n \to \R^N$ of measurable functions satisfying the condition
\[
\sup_{k \in \N}\int_E\|z_k\|^s< +\infty,
\]
for some $s>0$. Then there exists a subsequence $z_{k_j}$ and a weak-$*$ measurable map $\nu:E \to \mathcal{M}(\R^N)$ such that for a.e. $x \in E$, $\nu_x \in \mathcal{M}_+(\R^N)$ and in addition $\nu_x(\R^N) = 1$. Moreover, for every $A \subset E$, and for every $f \in C(\R^N)$, if
\[
f(z_{k_j}) \text{ is relatively weakly compact in } L^1(A),
\]
then,
\[
f(z_{k_j}) \rightharpoonup \bar f \text{ in } L^1(A), \text{ where } \bar f(x) =\langle \nu_x, f\rangle = \int_{\R^N}f(y)d\nu_x(y).
\]
In this case, we say that $z_{k_j}$ {generates the Young measure} $\nu$.
\end{Teo}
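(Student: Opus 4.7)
The plan is to realize the Young measure $\nu$ as a weak-$*$ limit of the maps $x \mapsto \delta_{z_k(x)}$ in an appropriate dual space. Concretely, I would work with the identification
\[
L^\infty_{w^*}(E; \mathcal{M}(\R^N)) \cong (L^1(E; C_0(\R^N)))^*,
\]
which holds because $C_0(\R^N)$ is separable. Each $x \mapsto \delta_{z_k(x)}$ belongs to the left-hand side and has unit norm, so by the Banach–Alaoglu theorem there exist a subsequence $z_{k_j}$ and a weak-$*$ measurable $\nu$ such that
\[
\int_E \phi(x)\,\psi(z_{k_j}(x))\,dx \longrightarrow \int_E \phi(x)\int_{\R^N}\psi(y)\,d\nu_x(y)\,dx
\]
for every $\phi \in L^1(E)$ and $\psi \in C_0(\R^N)$, and, by density, for every $h \in L^1(E; C_0(\R^N))$.

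Next I would verify that $\nu_x \in \mathcal{M}_+(\R^N)$ with $\nu_x(\R^N) = 1$ for a.e. $x$. Positivity follows by testing against non-negative products $\phi \otimes \psi$ and selecting countable dense families. The mass-one property is the delicate point: fix a cutoff $\chi_R \in C_c(\R^N)$ with $0 \le \chi_R \le 1$ and $\chi_R \equiv 1$ on $B_R$. Then for $0 \le \phi \in L^\infty(E)$,
\[
\int_E \phi(x)\,\chi_R(z_{k_j}(x))\,dx \longrightarrow \int_E \phi(x)\,\nu_x(\chi_R)\,dx,
\]
and Chebyshev's inequality yields
\[
\int_E \phi(x)\bigl(1 - \chi_R(z_{k_j}(x))\bigr)\,dx \le \frac{\|\phi\|_\infty}{R^s}\sup_k \int_E \|z_k\|^s\,dx.
\]
Letting $R \to \infty$ forces $\int_E \phi(x)\,\nu_x(\R^N)\,dx = \int_E \phi(x)\,dx$, whence $\nu_x(\R^N) = 1$ a.e. This is the only place where the growth assumption enters.

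For the representation $f(z_{k_j}) \rightharpoonup \bar f$ in $L^1(A)$, the case $f \in C_0(\R^N)$ is immediate from the weak-$*$ convergence above, since any $\phi \in L^\infty(A)$ may serve as a test function after extending by zero. To handle a general $f \in C(\R^N)$ under the hypothesis that $\{f(z_{k_j})\}$ is weakly relatively compact in $L^1(A)$, I would split $f = f\chi_R + f(1-\chi_R)$. The Dunford–Pettis theorem gives uniform integrability of $\{f(z_{k_j})\}$; combined with the tightness estimate $|\{|z_{k_j}| > R\}| \le R^{-s}\sup_k \int_E\|z_k\|^s\,dx$, the tail $f(z_{k_j})(1-\chi_R(z_{k_j}))$ has $L^1(A)$-norm going to $0$ as $R \to \infty$ uniformly in $j$. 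A parallel bound, now exploiting that $\nu_x$ is a probability measure together with $\bar f \in L^1(A)$, controls $\bar f - \overline{f\chi_R}$, and weak $L^1$ convergence follows by a diagonal argument.

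The single real obstacle is the tightness step $\nu_x(\R^N) = 1$: without the uniform $L^s$ bound one can only conclude $\nu_x(\R^N) \le 1$, corresponding to a loss of mass at infinity, and this is exactly the failure mode the growth condition rules out. Every other step is routine functional analysis once the correct duality is set up.
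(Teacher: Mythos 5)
You should note that the paper itself does not prove Theorem \ref{YoungT}: it is quoted as a classical result with a pointer to \cite[Section 3.1]{DMU}, and your argument is exactly the standard proof found there and in Ball's version of the theorem — embed $x \mapsto \delta_{z_k(x)}$ into $L^\infty_{w^*}(E;\mathcal{M}(\R^N)) \cong (L^1(E;C_0(\R^N)))^*$, extract a weak-$*$ limit by Banach--Alaoglu with the separable predual, get unit mass from the Chebyshev tightness estimate, and obtain the $L^1$ representation by truncation combined with the Dunford--Pettis equi-integrability. The proposal is correct; the only points left implicit, namely that $\nu_x(\R^N) \le 1$ a.e. by weak-$*$ lower semicontinuity of the norm before tightness upgrades it to equality, and that $\bar f \in L^1(A)$ via monotone convergence applied to the truncations $\langle \nu_x, |f|\chi_R\rangle$, are routine and consistent with the cited proof.
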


\section{The case $p > \frac{n}{n -1}$}\label{mainS}

In this section we prove weak upper semi-continuity of the functional $\D(\cdot)$. Fix $p \in [1,\infty]$. Consider the space
$$
X_p\doteq\left\{  A\in L^p(\T^n,\Sym^+(n)) : \dv A\in\mathcal{M}(\T^n,\R^n) \right\}.
$$
We recall that $A_k\rightharpoonup A$ in $X_p$ if $A_k \rightharpoonup A$ in $L^p$ ($A_k \overset{*}{\rightharpoonup} A$ if $p = \infty$) and $\dv A_k \overset{*}{\rightharpoonup} \dv A$ in $\mathcal{M}(\T^n,\R^n)$. We prove the following
\begin{Teo}\label{t_main_sup}
Let $p > \frac{n}{n - 1}$ and $\{A_k\}_k\subset X_p$ be such that $A_k \rightharpoonup A$ in $X_p$. Then we have
$$
\limsup_k \D(A_k)\leq \D(A).
$$
\end{Teo}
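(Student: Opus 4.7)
My plan is to reduce the global upper semicontinuity to a pointwise Jensen-type inequality at almost every $x \in \T^n$ via a Young measure extraction, and then verify this pointwise inequality through a blow-up/homogenization argument that exploits the divergence-quasiconcavity \eqref{quasiconcave} on the $\Z^n$-periodic cell. For the first step, observe that for $M \in \Sym^+(n)$ with eigenvalues $\lambda_1,\dots,\lambda_n \ge 0$ the AM-GM inequality gives $\det(M)^{\frac{1}{n-1}} \le c_n(\tr M)^{\frac{n}{n-1}} \le C_n \|M\|^{\frac{n}{n-1}}$. The assumption $p > \frac{n}{n-1}$ then yields a uniform bound for $\det(A_k)^{\frac{1}{n-1}}$ in $L^{p(n-1)/n}$ with exponent strictly greater than $1$, hence equi-integrability in $L^1(\T^n)$. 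Theorem \ref{YoungT} provides a (non relabelled) subsequence generating a Young measure $\{\nu_x\}_{x \in \T^n}$ with $\det(A_k)^{\frac{1}{n-1}} \rightharpoonup h$ in $L^1(\T^n)$, where $h(x) = \int_{\Sym^+(n)} \det(M)^{\frac{1}{n-1}} d\nu_x(M)$, and since $A_k \rightharpoonup A$ in $L^p$, the barycenter of $\nu_x$ equals $A(x)$ almost everywhere.

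Because $\limsup_k \D(A_k) = \int_{\T^n} h \dx$, the target reduces to the pointwise inequality $h(x_0) \le \det(A(x_0))^{\frac{1}{n-1}}$ at almost every $x_0 \in \T^n$. I would fix $x_0$ to be a Lebesgue point of $A$ and $h$ at which the singular part of $\dv A$ has zero density, and construct at $x_0$ a sequence $C_j$ of $\Z^n$-periodic, divergence-free, positive semi-definite matrix fields generating the constant-in-$y$ Young measure $\nu_{x_0}$, so that $\fint_{\T^n} C_j \to A(x_0)$ and $\fint_{\T^n} \det(C_j)^{\frac{1}{n-1}} \to h(x_0)$. Applying \eqref{quasiconcave} to each $C_j$ and passing $j \to \infty$ would give exactly the required inequality. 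The construction proceeds by blowing up around $x_0$ via $A_k^{r}(y) \doteq A_k(x_0 + r y)$ extended periodically on $\T^n$, followed by a diagonal extraction in $(k,r)$: the rescaled fields generate the correct Young measure, while the rescaled divergence $r(\dv A_k)(x_0 + r\cdot)$ has vanishing total mass on $\T^n$ thanks to the Lebesgue-point choice for $\dv A$, so the sequence is approximately divergence-free.

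The main obstacle is turning this approximate divergence-freeness into an exact one while preserving the constraint $C_j \in \Sym^+(n)$. The classical Fourier projection onto $\ker(\dv)$ is linear and does not respect positive semi-definiteness, which is precisely the difficulty highlighted in the introduction. Two natural routes seem possible: (i) design a corrector that is small in $L^\infty$ relative to the mean $A(x_0)$, using that on the set where $\det(A(x_0)) > 0$ the matrix $A(x_0)$ is strictly positive definite and a small symmetric perturbation keeps the field in $\Sym^+(n)$, the complementary set $\{\det A = 0\}$ being negligible for the upper bound we seek; (ii) invoke a quantitative version of \eqref{quasiconcave} as hinted by \cite[Theorem 2.1]{SER2}, in which the right-hand side carries a remainder controlled by $\|\dv C_j\|_{\mathcal{M}}$ that vanishes in the limit, thus bypassing the need for an exact correction. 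In either case, the contribution of the singular part of $\dv A$ is absorbed as a lower-order term thanks to the Lebesgue-point choice at $x_0$, and integration in $x_0$ closes the argument.
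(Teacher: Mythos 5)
Your first half is sound and coincides with the paper's reduction: the growth bound $\det(M)^{\frac{1}{n-1}}\le C_n\|M\|^{\frac{n}{n-1}}$ plus $p>\frac{n}{n-1}$ gives equi-integrability, Theorem \ref{YoungT} yields $\limsup_k\D(A_k)=\int_{\T^n}\langle\nu_x,\det(\cdot)^{\frac{1}{n-1}}\rangle\dx$ along a subsequence, and everything reduces to the pointwise inequality \eqref{FM_ineq} at suitable Lebesgue points. The gap is in the second half: you correctly identify the obstacle (the blow-up fields are only approximately divergence-free, and no linear projection onto $\ker(\dv)$ preserves $\Sym^+(n)$), but neither of your two routes resolves it. For route (i), the only smallness you have is that the rescaled divergence $r(\dv A_k)(x_0+r\,\cdot)$ has small total variation; solving away such a divergence produces a corrector that is small at best in an $L^{\frac{n}{n-1}}$-type norm, never in $L^\infty$ (the fields themselves are unbounded in $L^\infty$), and positivity must be preserved pointwise relative to the oscillating, possibly degenerate field $C_j(y)$, not relative to the mean $A(x_0)$. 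Moreover you cannot discard the set $\{\det A=0\}$: there the claim is $h(x_0)\le 0$, i.e. $h(x_0)=0$, which is a genuine constraint on the Young measure; the paper neutralizes it by the $A_k+\varepsilon\id_n$ reduction of Remark \ref{pos}, not by ignoring it. For route (ii), a quasiconcavity inequality with a remainder controlled by $\|\dv C_j\|_{\mathcal{M}}$ is not available as a citable black box; proving such a quantitative statement is essentially the content of the paper's proof, and it is exactly where the real work lies.

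Concretely, the paper sidesteps the projection problem altogether: it forms $B_{k,R}(x)=\varphi(x)A_k(a+Rx)+(1-\varphi(x))A(a)$, which is automatically $\Sym^+(n)$-valued and periodic, and re-runs Serre's Monge--Amp\`ere argument on this non-divergence-free field, keeping the error term $\int_{\T^n}(\dv B_{k,R},\nabla\phi_{k,R})\dx$. Making that remainder vanish requires the a priori gradient bound \eqref{comp}, $\|\nabla\phi_{k,R}\|_{L^\infty}\le C\|S_{k,R}\|$, together with the uniform bound \eqref{imp} on $\|S_{k,R}\|$, which in turn is proved through the characteristic-polynomial expansion and the finiteness of the moments $\langle\nu_a,M_i^{\frac{1}{n-1}}(\cdot)\rangle$; only then do the Lebesgue-point and zero-density choices at $a$ kill the terms $III^1,III^2,III^3$. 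None of this machinery (or any substitute for it) appears in your proposal, so as written the argument does not close.
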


To prove Theorem \ref{t_main_sup} we follow the argument of \cite{FM}, indeed we will prove that the Young measure $\nu =(\nu_x)_{x\in \T^n}$ generated by the sequence $\{A_k\}_k$, satisfies
\begin{equation}\label{FM_ineq}
\langle \nu_x, \det(\cdot)^\frac{1}{n-1}\rangle \leq \det(A(x))^\frac{1}{n-1},
\end{equation}
for almost every $x\in \T^n$. Indeed, by the Fundamental Theorem  of Young measures and \eqref{FM_ineq}, we would conclude 
$$
\limsup_k \D(A_k)=\lim_k \D(A_k)= \int_{\T^n} \langle \nu_x, \det(\cdot)^\frac{1}{n-1}\rangle \,\dx \overset{\eqref{FM_ineq}}{\leq } \D(A),
$$
i.e. the weak upper semi-continuity of $\D(\cdot)$ on $X_p$, where in the first equality we used the fact that up to a subsequence we can further suppose that $\limsup_k \D(A_k)=\lim_k \D(A_k)$. The argument to obtain \eqref{FM_ineq} is different to the one given in \cite{FM}  and heavily relies on the ideas of \cite[Proof of Theorem 2.2]{SER}. First we make the following remarks of technical nature.

 \begin{rem}\label{pos}
 We remark that it is sufficient to prove the theorem in the case in which $A_k, A\geq \varepsilon \id_n$ for some $\varepsilon>0$. Indeed, in the general case one can consider 
 $A_k^\varepsilon=A_k+\varepsilon\id_n$, for which one proved weak upper semi-continuity of $\D$, meaning that 
 $$
 \limsup_k \D(A_k^\varepsilon)\leq \D(A^\varepsilon).
 $$
By monotonicity of the determinant on the cone of positive definite matrices, we also have
$$
\limsup_k \D(A_k)\leq  \limsup_k \D(A_k^\varepsilon)\leq \D(A^\varepsilon),
$$
thus the theorem in the general case follows by letting $\varepsilon\rightarrow 0$.
\end{rem}
\begin{rem}\label{smooth}
We can also suppose that the sequence $\{A_k\}_k$ is smooth.  Indeed for any $A_k\in X_p$ there exists a smooth matrix field $\tilde A_k\in X_p$ such that 
\begin{enumerate}[(i)]
\item $\|A_k -\tilde A_k\|_{L^p(\T^n)}\leq \frac{1}{k}$;\label{strong}
\item $\int_{\T^n} \| \dv (\tilde A_k)\|(x) \dx \leq \|\dv (A_k)\|_{\mathcal{M}(\T^n,\R^n)}$ for every $k$;\label{measure}
\item $\tilde A_k \rightharpoonup A$ in $X_p$.\label{weakcon}
\end{enumerate}
To construct it, consider a standard family of mollifiers $\rho_\varepsilon(x) = \frac{1}{\varepsilon^n}\rho\left(\frac{x}{\varepsilon}\right)$, where $$\rho \in C_c^\infty(B_1(0)),\quad  \int_{\R^n}\rho(x)\dx = 1,\quad  \rho(x) \ge 0, \forall x \in \R^n$$ and consequently $A_{k,\varepsilon}(x)\doteq A_k* \rho_\varepsilon(x)$. Clearly $A_{k,\varepsilon} \in X_p$ and is smooth $\forall k,\varepsilon$. As $\varepsilon \to 0$, we have that $A_{k,\varepsilon} \to A_k$ for fixed $k$ in $L^p(\T^n,\Sym^+(n))$. Hence, for every $k$ we can choose $\varepsilon_k$ such that $\eqref{strong}$ is fulfilled. Define $\tilde A_k \doteq A_{k,\varepsilon_k}$. We need to show \eqref{measure} and \eqref{weakcon}. Since mollification does not increase the total mass, we have
\[
\|\tilde A_k\|_{L^p} \le \|A_k\|_{L^p}, \quad  \|\dv (\tilde A_k)\|_{\mathcal{M}(\T^n,\R^n)} \le \|\dv (A_k)\|_{\mathcal{M}(\T^n,\R^n)}, \forall k \in \N.
\]
The second inequality is exactly $\eqref{measure}$. Moreover, by the weak convergence in $X_p$, both $\|A_k\|_{L^p}$ and $\|\dv A_k\|_{\mathcal{M}(\T^n,\R^n)}$ are equibounded sequences, hence $\tilde A_k$ is precompact in $X_p$, in the sense that for every subsequence, there exists a further subsequence converging in $X_p$ to some tensor field $B \in X_p$. By \eqref{strong}, any limit point of this sequence with respect to the topology of $X_p$ must be the same as the one of $A_k$, namely $A$, hence \eqref{weakcon} follows. Thus, if Theorem \ref{t_main_sup} is true for a smooth sequence, we have
\begin{equation}\label{inequal}
\begin{split}
\limsup_k \D(A_k)&=\limsup_k \left(\D(A_k)-\D(\tilde A_k)+\D(\tilde A_k)\right)\\\
&\leq \limsup_k \left(\D(A_k)-\D(\tilde A_k)\right)+\limsup_k \D(\tilde A_k)\le \D(A).
\end{split}
\end{equation}
Let us justify the last inequality. We can estimate, using the \"Holder property of $t \mapsto t^{\frac{1}{n -1}}$,
\[
|\D(A_k)-\D(\tilde A_k)| \le \int_{\T^n}|\det(A_k(x))^{\frac{1}{n - 1}} - \det(\tilde A_k(x))^{\frac{1}{n - 1}}|\dx \le\int_{\T^n}|\det(A_k(x)) - \det(\tilde A_k(x))|^{\frac{1}{n - 1}}\dx.
\]
Moreover, a simple estimate valid for every couple of matrices $X, Y \in \R^{n\times n}$ gives, for some dimensional constant $c > 0$,
\[
|\det(X) - \det(Y)| \le c(\|X\|^{n - 1} + \|Y\|^{n - 1})\|X - Y\|.
\]
Therefore, using this inequality and the subadditivity of $t \mapsto t^{\frac{1}{n -1}}$
\begin{align*}
&\int_{\T^n}|\det(A_k(x)) - \det(\tilde A_k(x))|^{\frac{1}{n - 1}}\dx \\
&\le c^{\frac{1}{n -1}}\int_{\T^n}\left(\|A_k(x)\|^{n - 1} + \|\tilde A_k(x)\|^{n - 1}\right)^{\frac{1}{n -1}}\|A_k(x) - \tilde A_k(x))\|^{\frac{1}{n - 1}}\dx\\
&\le c^{\frac{1}{n -1}}\int_{\T^n}\left(\|A_k(x)\| + \|\tilde A_k(x)\|\right)\|A_k(x) - \tilde A_k(x))\|^{\frac{1}{n - 1}}\dx\\
&\le c^{\frac{1}{n -1}}\left(\int_{\T^n}\left(\|A_k(x)\| + \|\tilde A_k(x)\|\right)^{\frac{n}{n - 1}}\dx\right)^{\frac{n - 1}{n}}\left(\int_{\T^n}\|A_k(x) - \tilde A_k(x))\|^{\frac{n}{n -1}}\dx\right)^{\frac{1}{n}},
\end{align*}
the last inequality being \"Holder inequality with exponents $\frac{n}{n - 1}$ and $n$. The previous inequality and \eqref{strong} justify the last estimate of  \eqref{inequal}.
\end{rem}

\begin{proof}[Proof of Theorem \ref{t_main_sup}]

First notice that up to (non-relabeled) subsequences we can suppose $$\limsup_k \D(A_k)=\lim_k \D(A_k)$$ and that $\{A_k\}_k$ generates the Young measure $\nu = (\nu_x)_{x\in \T^n}$. From Remark \ref{pos} and Remark \ref{smooth}, we can further suppose that both $A_k, A\geq \varepsilon \id_n$ for some $\varepsilon>0$ and $A_k$ are smooth.
\\
\\
\indent\fbox{Step 1: definition of the main objects}
\\
\\
Let $\mu_{k} \in \mathcal{M}_+(\T^n)$ be the finite Radon measures defined by $\mu_k(E) \doteq \int_E\|\dv(A_k)\|(x)\dx$ and call $\mu$ its weak-* limit (that we can always suppose to exist up to further subsequences). Notice that, for every $i \in \{1,\dots,n\}$, the map
\[
x \mapsto M^{\frac{1}{n - 1}}_i(A_k(x))
\]
is equibounded in $L^{\frac{p(n - 1)}{i}}(\T^n)$. Since $p > \frac{n}{n - 1}$ and $i \le n$, these sequences fulfill the hypotheses of Theorem \ref{YoungT}, hence
\[
M^{\frac{1}{n - 1}}_i(A_k(x)) \rightharpoonup \langle\nu_x, M_i^{\frac{1}{n -1}}(\cdot)\rangle\quad  \text{ in } L^1(\T^n).
\]
Consider $T' \subset \T^n$ to be the set of points $a \in \T^n$ such that
\begin{itemize}
\item $\|A(a)\|<\infty$;
\item $\langle \nu_a, M_i^{\frac{1}{n-1}}(\cdot)\rangle < + \infty, \forall i \in \{0,\dots,n\}$;
\item $a$ is a Lebesgue point for $x \mapsto A(x)$;
\item $a$ is a Lebesgue point for $x \mapsto \langle \nu_x, M_i^{\frac{1}{n-1}}(\cdot)\rangle$, for $i \in \{1,\dots,n\}$.
\end{itemize}
Since these are  $L^1(\T^n)$ functions, we get $|\T^n\setminus T'| = 0$.  Let $\mu = g\dx + \mu^s$ be the Lebesgue decomposition of the weak-* limit of $\mu_k$, and define $T'' \subset \T^n$ to be the set of points that are both Lebesgue points for $g$ and density $0$ points for $\mu^s$. By \cite[Theorem 1.31]{EVG}, $|\T^n\setminus T''| =0$. Finally, define $T \doteq T'\cap T''\cap (0,1)^n$. As explained before the proof of the theorem, we want to prove \eqref{FM_ineq}, namely
\begin{equation*}
\langle \nu_a,\det(\cdot)^{\frac{1}{n -1}}\rangle \le \det(A(a))^{\frac{1}{n - 1}}, \quad \forall a \in T.
\end{equation*}
Therefore, from now on we fix $a \in T$. Consider a cut-off function  $\varphi \in C^\infty_c((0,1)^n)$, $0\leq \varphi\leq 1$. For $k \in \N$ and $R>0$, we define $B_{k,R}$ over $(0,1)^n$ by
$$
B_{k,R}(x)\doteq \varphi(x) A_{k}(a+Rx) + (1 - \varphi(x))A(a).
$$
Remark that $B_{k,R}\equiv A(a)$ over the boundary of $[0,1]^n$, therefore $B_{k,R}$ can be extended smoothly by periodicity to $\R^n$. This defines $B_{k,R}$ over $\T^n$. Notice moreover that $B_{k,R}$ takes values in $\Sym^+(n)$.
\\
\\
\indent\fbox{Step 2: Monge-Amp\`ere and the main inequality}
\\
\\
The argument of this step is the same as the one of \cite[Theorem 2.2]{SER}. Let $\phi_{k,R}: \T^n \to \R$ be the solution of 
\begin{equation}\label{eq_c}
\det(H\phi_{k,R} + S_{k,R}) = \det(B_{k,R})^{\frac{1}{n - 1}}\doteq f_{k,R},
\end{equation}
where $H\phi_{k,R}(x) + S_{k,R}(x) \in \Sym^+(n), \forall x \in \T^n$, with the constraint 
\begin{equation}\label{con}
\det(S_{k,R}) = \int_{\T^n}f_{k,R}(x)\, \dx.
\end{equation}
From \cite[Theorem 2.2]{YAN}, it is known that the latter is a necessary and sufficient condition to solve the Monge Amp\`ere type equation \eqref{eq_c}.
Note that \eqref{eq_c} is equivalent to 
\begin{equation}\label{eq_c_2}
\det(H\psi_{k,R} ) =  f_{k,R},
\end{equation}
where $H\psi_{k,R}(x)$ is positive definite $\forall x \in \T^n$ and $\psi_{k,R}(x)\doteq \frac{1}{2}x^T S_{k,R} x +\phi_{k,R}(x)$. We can, and will, assume that
\begin{equation}\label{zer}
\psi_{k,R}(a) = \phi_{k,R}(a) = 0,\quad  \forall k,R,
\end{equation}
since the solution of \eqref{eq_c_2} is determined up to constants (see again \cite[Theorem 2.2]{YAN}). We have 
$$
f_{k,R}=\left(f_{k,R}\det(B_{k,R})\right)^\frac{1}{n}=\left( \det(H\psi_{k,R} B_{k,R} ) \right)^\frac{1}{n}.
$$
Since, for every $x \in \T^n$, $k \in \N$, $R > 0$, $H\psi_{k,R}(x)B_{k,R}(x)$ is the product of two symmetric and positive definite matrices, their product is diagonalizable with positive eigenvalues (see \cite[Proposition 6.1]{SERBOOK}). Dropping the dependence of $k,R,x$, if we call these eigenvalues $\lambda_1,\dots, \lambda_n$ we can write
\[
f_{k,R} = \left( \det(H\psi_{k,R} B_{k,R} ) \right)^\frac{1}{n} = (\lambda_1\dots\lambda_n)^{\frac{1}{n}} \le \frac{\sum_{i = 1}^n\lambda_i}{n},
\]
where in the last inequality we use the arithmetic-geometric mean inequality. Hence,
\[
f_{k,R} \le \frac{\tr(H\psi_{k,R} B_{k,R})}{n}.
\]
Using the definition of $\psi_{k,R}$ and rewriting
\[
\tr(H\phi_{k,R}B_{k,R}) = \dv(B_{k,R}\nabla \phi_{k,R}) - (\dv(B_{k,R}),\nabla\phi_{k,R}),
\]
we finally get 
\begin{equation}\label{tbin}
f_{k,R}\le\frac{1}{n} (\tr(B_{k,R} S_{k,R} )+\dv(B_{k,R}\nabla \phi_{k,R}) - (\dv(B_{k,R}),\nabla\phi_{k,R})).
\end{equation}
We consider $S_{k,R}$ of the form
\[
S_{k,R} = \lambda_{k,R}\cof\left(\int_{\T^n}B_{k,R}(x)\dx\right).
\]
By \eqref{con}

\begin{equation}\label{lam}
\displaystyle\lambda_{k,R}=\frac{\left(\int_{\T^n}\det(B_{k,R})^{\frac{1}{ n - 1}}(x)\dx\right)^{\frac{1}{n}}}{\left(\det(\int_{\T^n}B_{k,R}(x)\dx)\right)^{\frac{n - 1}{n}}}.
\end{equation}

Observing that $\int_{\T^n}\dv(B_{k,R}\nabla \phi_{k,R})\dx = 0$, we integrate \eqref{tbin}, getting
\begin{equation}\label{main}
\int_{\T^n} \det(B_{k,R})^\frac{1}{n - 1}\dx \le \frac{1}{n}\int_{\T^n}\tr(B_{k,R}S_{k,R})\dx- \frac{1}{n}\int_{\T^n}(\dv(B_{k,R}),\nabla\phi_{k,R}))\dx.
\end{equation}
We rewrite
\begin{align*}
\int_{\T^n}\tr(B_{k,R}S_{k,R})\dx &= \tr\left(\left(\int_{\T^n}B_{k,R}\dx\right) S_{k,R}\right) = \lambda_{k,R}\tr\left(\left(\int_{\T^n}B_{k,R}\dx\right) \cof\left(\int_{\T^n}B_{k,R}(x)\dx\right)\right)\\
& = n\lambda_{k,R}\det\left(\int_{\T^n}B_{k,R}(x)\dx\right) \\
& \overset{\eqref{lam}}{=} n\left(\int_{\T^n}\det(B_{k,R})^{\frac{1}{ n - 1}}(x)\dx\right)^{\frac{1}{n}}\left(\det\left(\int_{\T^n}B_{k,R}(x)\dx\right)\right)^{\frac{1}{n}}.
\end{align*}

Finally, define also $\gamma_{k,R} \doteq \left(\int_{\T^n}\det(B_{k,R})^{\frac{1}{ n - 1}}(x)\dx\right)^{\frac{1}{n}}$. By the monotonicity of the determinant and the fact that $A_k(x) \ge \varepsilon \id_n,\forall x \in \T^n, \forall k \in \N$, and $A(a) \ge \varepsilon\id_n$, we have $B_{k,R} \ge \varepsilon\id_n, \forall k,R$, that implies
\begin{equation}\label{bound}
\gamma_{k,R} \ge  \varepsilon^{\frac{1}{n - 1}},\quad  \forall k,R.
\end{equation}
We divide by $\gamma_{k,R}$ in \eqref{main}, to obtain:

\begin{equation}\label{equaz1}
\left(\int_{\T^n}\det(B_{k,R})^{\frac{1}{ n - 1}}(x)\dx\right)^{\frac{n - 1}{n}} \le \left(\det\left(\int_{\T^n}B_{k,R}(x)\dx\right)\right)^{\frac{1}{n}}- \frac{1}{n\gamma_{k,R}}\int_{\T^n}(\dv(B_{k,R}),\nabla\phi_{k,R}))\dx
\end{equation}
By monotonicity of the determinant we have
$$
\int_{\T^n} \varphi(x)^{\frac{n}{n-1}}\det(A_{k}(a+Rx))^\frac{1}{n - 1}\dx \le \int_{\T^n}\det(B_{k,R})^{\frac{1}{ n - 1}}(x)\dx,
$$
so that \eqref{equaz1} becomes
\begin{equation}\label{eq}
\begin{split}
&\left(\int_{\T^n} \varphi(x)^{\frac{n}{n-1}}\det(A_{k}(a+Rx))^\frac{1}{n - 1}\dx\right)^{\frac{n - 1}{n}}\\
& \le \left(\det\left(\int_{\T^n}B_{k,R}(x)\dx\right)\right)^{\frac{1}{n}}- \frac{1}{n\gamma_{k,R}}\int_{\T^n}(\dv(B_{k,R}),\nabla\phi_{k,R}))\dx
\end{split}
\end{equation}
thus by denoting 
\begin{equation}\nonumber
I_{k,R} \doteq \int_{\T^n} \varphi(x)^{\frac{n}{n-1}}\det(A_{k}(a+Rx))^\frac{1}{n - 1}\dx,
\end{equation}

\begin{equation}\nonumber
II_{k,R} \doteq \det\left(\int_{\T^n}B_{k,R}(x)\dx\right),
\end{equation}

\begin{equation}\nonumber
III_{k,R}\doteq \int_{\T^n}(\dv(B_{k,R}),\nabla\phi_{k,R}))\dx,
\end{equation}
we can put \eqref{eq} in a more compact form:
\begin{equation}\label{equaz}
I_{k,R}^\frac{n-1}{n}\leq II^\frac{1}{n}_{k,R}- \frac{1}{n\gamma_{k,R}}III_{k,R}.
\end{equation}
We will first let $k \to + \infty$ and then $R \to 0$. To this aim, we study separately the three terms.
\\
\\
\indent\fbox{Step 3: $I_{k,R}$}
\\
\\
Denoting $Q_R=a+ [0,R]^n$ we have
$$
I_{k,R}=\int_{Q_R} \varphi^{\frac{n}{n-1}}\left(\frac{y-a}{R}\right)\det(A_{k}(y))^\frac{1}{n - 1}\frac{\dy}{R^n}.
$$
Since the sequence $A_{k}$ generates the Young measure $\nu$, by letting $k\to \infty$, we get 
$$
\lim_{k \to \infty} I_{k,R}= \int_{Q_R} \varphi^{\frac{n}{n-1}}\left(\frac{y-a}{R}\right) \langle \nu_y, \det(\cdot) ^\frac{1}{n-1}\rangle \frac{\dy}{R^n}= \int_{\T^n} \varphi^{\frac{n}{n-1}}\left(x\right) \langle \nu_{a+Rx}, \det(\cdot) ^\frac{1}{n-1}\rangle\dx.
$$
Finally, since  $a\in (0,1)^n$ was a Lebesgue point for the function $x \mapsto\langle \nu_{x}, \det(\cdot) ^\frac{1}{n-1}\rangle$, letting $R\to 0$ we achieve 
$$
\lim_{R\to 0 } \lim_{k\to \infty}I_{k,R} = \langle \nu_a, \det (\cdot)^\frac{1}{n-1}\rangle \int_{\T^n}  \varphi^{\frac{n}{n-1}}(x) \dx.
$$
\\
\\
\indent\fbox{Step 4: $II_{k,R}$}
\\
\\
Since $A_k \rightharpoonup A$ in $L^p(\T^n)$, we have 
\begin{equation}\label{est}
\lim_{k\to \infty} II_{k,R} = \det\left( \int_{\T^n} \varphi(x) A(a+Rx) \dx + A(a) \int_{\T^n} 1-\varphi(x) \dx\right),
\end{equation}
and since 
\[
 \int_{\T^n} \varphi(x) A(a+Rx) \dx=\int_{Q_R} \varphi\left(\frac{y-a}{R}\right)A(y) \frac{\dy}{R^n}
\]
and $|\varphi(x)|\le 1\; \forall x \in \T^n$, we also get that 
\[
\left\|\int_{\T^n} \varphi(x) A(a+Rx) \dx-A(a) \int_{\T^n} \varphi(x) \dx \right\|\leq \int_{Q_R} \|A(y)-A(a)\|\frac{\dy}{R^n}.
\]
The last expression tends to $0$ as $R \to 0^+$, since $a$ is a Lebesgue point for $x \mapsto A(x)$. Thus, by letting $R\to 0$ in \eqref{est}, we conclude that
$$
\lim_{R\to 0} \lim_{k\to \infty} II_{k,R} = \det(A(a)).
$$
\\
\indent\fbox{Step 5: $III_{k,R}$}
\\
\\
To prove \eqref{FM_ineq}, we are just left to show that $\lim_{R\to 0}\lim_{k\to \infty} III_{k,R}=0$. To do this, we first compute $$\dv(B_{k,R}) = \varphi(x)R\dv(A_k)(a + Rx) + (A_k(a + Rx) -A(a))\nabla\varphi(x).$$ Therefore:
\begin{align*}
III_{k,R} &= R\int_{\T^n}\varphi(x)(\dv(A_k)(a + Rx),\nabla\phi_{k,R})\dx \\
&+ \int_{\T^n}((A_k(a + Rx) -A(a))\nabla\varphi,\nabla\phi_{k,R})\dx.
\end{align*}
We can use the divergence theorem to rewrite more conveniently the second term:
\begin{align*}
\int_{\T^n}((A_k(a + Rx) -A(a))\nabla\varphi,\nabla\phi_{k,R})\dx = \\
\sum_{i,j}\int_{\T^n}((A_k)_{ij}(a + Rx) -A_{ij}(a))\partial_j\varphi\partial_i\phi_{k,R}\dx = \\
-\sum_{i,j}\int_{\T^n}\partial_i((A_k)_{ij}(a + Rx) -A_{ij}(a))\partial_j\varphi\phi_{k,R}\dx  \\
-\sum_{i,j}\int_{\T^n}((A_k)_{ij}(a + Rx) -A_{ij}(a))\partial_{ij}\varphi \phi_{k,R}\dx = \\
-R\sum_{i,j}\int_{\T^n}(\partial_iA_k)_{ij}(a + Rx) \partial_j\varphi\phi_{k,R}\dx  \\
-\sum_{i,j}\int_{\T^n}((A_k)_{ij}(a + Rx) -A_{ij}(a))\partial_{ij}\varphi \phi_{k,R}\dx = \\
-R\int_{\T^n}((\dv A_k)(a + Rx), \nabla\varphi)\phi_{k,R}\dx  \\
-\int_{\T^n}(A_k(a + Rx) -A(a)),H\varphi) \phi_{k,R}\dx.
\end{align*}
Summarizing, we have
\begin{align*}
III_{k,R} &= R\int_{\T^n}\varphi(x)(\dv(A_k)(a + Rx),\nabla\phi_{k,R})\dx \\
&-R\int_{\T^n}((\dv A_k)(a + Rx), \nabla\varphi)\phi_{k,R}\dx  \\
&-\int_{\T^n}(A_k(a + Rx) -A(a)),H\varphi) \phi_{k,R}\dx.
\end{align*}
We will denote with:
\begin{equation}\nonumber
III^1_{k,R} \doteq R\int_{\T^n}\varphi(x)(\dv(A_k)(a + Rx),\nabla\phi_{k,R})\dx ,
\end{equation}

\begin{equation}\nonumber
III^2_{k,R} \doteq R\int_{\T^n}((\dv A_k)(a + Rx), \nabla\varphi)\phi_{k,R}\dx ,
\end{equation}

\begin{equation}\nonumber
III^3_{k,R} \doteq \int_{\T^n}(A_k(a + Rx) -A(a)),H\varphi) \phi_{k,R}\dx.
\end{equation}
\\
\\
\indent\fbox{Step 6: Estimates on $\phi_{k,R}$}
\\
\\
As remarked in \cite[Section 5.2]{SER}, $\psi_{k,R}$ is convex, for every $k,R$, and moreover the estimate
\begin{equation}\label{comp}
\|\nabla \phi_{k,R}\|_{L^\infty(\T^n)} \le C\|S_{k,R}\|
\end{equation}
holds for every $k \in \N$ and $R >0$. We will now show that
\begin{equation}\label{imp}
\limsup_{R\to 0^+}\limsup_{k \to +\infty}\|S_{k,R}\| < + \infty.
\end{equation}
If we do this, we find, through a diagonal argument, a subsequence $k_j$ such that  $\phi_{k_j,\frac{1}{m}}$ converges uniformly to a function $\phi_{\frac{1}{m}}$ as $j \to \infty$. Moreover we find a constant $\lambda > 0$ such that
\begin{equation}\label{unif}
\|\phi_{\frac{1}{m}}\|_{C^0(\T^n)} \le \lambda, \quad \forall m \in \N.
\end{equation}
Let us first show how \eqref{imp} implies this last claim. By \eqref{zer} we have $\phi_{k,\frac1m}(a) = 0, \forall k,m$, and the estimate \eqref{comp} combined with $\eqref{imp}$ tells us that for every fixed $m$, $\{\phi_{k,\frac1m}\}_{k \in \N}$ is a precompact subset of $C^0(\T^n)$, hence there exists a diagonal subsequence $\phi_{k_j,\frac1m}$ that converges uniformly to $\phi_{\frac1m}$ for every $m$ as $j \to \infty$. Moreover, estimate \eqref{comp} implies that for some universal constant $\alpha > 0$ 
\[
\|\phi_{k_j,\frac{1}{m}}\|_{C^0(\T^n)} \le \alpha\|S_{k_j,\frac{1}{m}}\|,\quad \forall j, m.
\]
Therefore, in the limit as $j \to \infty$, we also infer
\[
\|\phi_{\frac{1}{m}}\|_{C^0(\T^n)} \le \alpha\limsup_{k \to \infty}\|S_{k,\frac{1}{m}}\|,\quad \forall m
\]
and finally
\[
\limsup_{m \to \infty}\|\phi_{\frac{1}{m}}\|_{C^0(\T^n)} \le \alpha\limsup_{m \to \infty}\limsup_{k \to \infty}\|S_{k,\frac{1}{m}}\| \overset{\eqref{imp}}{<} +\infty,
\]
which finally implies \eqref{unif}. Let us prove \eqref{imp}. By its definition, we have $$S_{k,R} = \lambda_{k,R}\cof\left(\int_{\T^n}B_{k,R}(x)\dx\right).$$ Therefore it suffices to prove separately that
\begin{equation}\label{imp1}
\limsup_{R\to 0}\limsup_{k\to \infty}\left\|\cof\left(\int_{\T^n}B_{k,R}(x)\dx\right)\right\| < +\infty
\end{equation}
and
\begin{equation}\label{imp2}
\limsup_{R\to 0}\limsup_{k\to \infty}\lambda_{k,R} < +\infty.
\end{equation}

We start with $\eqref{imp1}$. The weak convergence of $A_k$ implies, as in \eqref{est} and the subsequent computations, that
\[
\lim_{R\to 0}\lim_{k \to \infty}\int_{\T^n}B_{k,R}(x)\dx = A(a),
\]
since $a \in T'$.
Hence
\[
\limsup_{R\to 0}\limsup_{k\to \infty}\left\|\cof\left(\int_{\T^n}B_{k,R}(x)\dx\right)\right\| = \lim_{R\to 0}\lim_{k\to \infty}\left\|\cof\left(\int_{\T^n}B_{k,R}(x)\dx\right)\right\| =  \|\cof(A(a))\| < + \infty,
\]
where the last inequality is again justified by $a \in T'$. Finally, we compute \eqref{imp2}. By definition
\[
\displaystyle\lambda_{k,R}=\frac{\left(\int_{\T^n}\det(B_{k,R})^{\frac{1}{ n - 1}}(x)\dx\right)^{\frac{1}{n}}}{\left(\det(\int_{\T^n}B_{k,R}(x)\dx)\right)^{\frac{n - 1}{n}}}.
\]
Analogously to the estimate of $\gamma_{k,R}$ of \eqref{bound}, we have
\[
\left(\det\left(\int_{\T^n}B_{k,R}(x)\dx\right)\right)^{\frac{n - 1}{n}} \ge \varepsilon^{n - 1}.
\]
Therefore, to conclude the proof, we just need to show that
\[
\limsup_{R \to 0} \limsup_{k \to \infty} \int_{\T^n}\det(B_{k,R})^{\frac{1}{ n - 1}}(x)\dx < +\infty.
\]
First note that
\[
A(a) \le \|A(a)\|\id_n,
\]
and consequently estimate
\[
\det(B_{k,R}) \le \det(\varphi(x)A_k(a+Rx) + (1 -\varphi(x))\|A(a)\|\id) = P_{-\varphi(x) A_k(a + Rx)}((1 - \varphi(x))\|A(a)\|),
\]
where $P_{-\varphi(x) A_k(a + Rx)}$ is the characteristic polynomial of $-\varphi(x) A_k(a + Rx)$. By the structure of the characteristic polynomial and the subadditivity of the function $t \mapsto t^{\frac{1}{n - 1}}$, we can bound
\begin{align*}
\det(B_{k,R})^\frac{1}{n - 1}(x) &\le |P|^{\frac{1}{n - 1}}_{-\varphi(x) A_k(a + Rx)}((1 - \varphi(x))\|A(a)\|) \\
&\le \sum_{i = 0}^n\left[(1 - \varphi(x))^i\|A(a)\|^iM_{n - i}(\varphi(x) A_k(a + Rx))\right]^{\frac{1}{n - 1}}.
\end{align*}
Since $M_{n - i}$ is $n - i$ homogeneous, $M_{n - i}(\varphi(x) A_k(a + Rx)) = \varphi^{n - i}(x)M_{n - i}(A_k(a + Rx))$. Hence
\[
\det(B_{k,R})^\frac{1}{n - 1}(x) \le \sum_{i = 0}^n\left[(1 - \varphi(x))^i\|A(a)\|^i\varphi^{n - i}(x)M_{n - i}(A_k(a + Rx))\right]^{\frac{1}{ n -1}}.
\]
Now observe that, for every $i\in\{0,1,\dots, n\}$,
\[
\int_{\T^n}\left[(1 - \varphi)^i\varphi^{n - i}M_{n - i}(A_k(a + Rx))\right]^{\frac{1}{ n -1}}\dx \to \int_{\T^n}\left[(1 - \varphi)^i\varphi^{n - i}\right]^{\frac{1}{n - 1}}\langle\nu_{a + Rx},M_{n-i}^{\frac{1}{n -1}}(\cdot)\rangle\dx
\]
as $k \to \infty$, by the Fundamental Theorem of Young measures. Letting $R \to 0^+$, since $a$ is a Lebesgue point for $x \mapsto\langle\nu_{x},M_{n-i}^{\frac{1}{n -1}}(\cdot)\rangle$, we find that
\begin{align*}
\limsup_{R\to 0^+}\limsup_{k \to \infty}\int_{\T^n}\det(B_{k,R})^{\frac{1}{n - 1}}(x)\dx &\leq \sum_{i = 0}^n\langle\nu_{a},M_{n-i}^{\frac{1}{n -1}}(\cdot)\rangle\int_{\T^n}\left[(1 - \varphi(x))^i\|A(a)\|^i\varphi^{n - i}(x)\right]^{\frac{1}{ n -1}}\dx\\
& \le \sum_{i = 0}^n\langle\nu_{a},M_{n-i}^{\frac{1}{n -1}}(\cdot)\rangle\|A(a)\|^i,
\end{align*}
the last inequality being true since $0 \le\varphi(x) \le 1, \forall x \in \T^n$. Clearly the last term is equibounded by our choice $a\in T'$. We are now going to prove that the three terms of $III_{k_j,\frac{1}{m}}$ converge to $0$ as $j \to \infty$ and $m\to \infty$.
\\
\\
\indent\fbox{Step 7: $III^1_{k_j,\frac 1m}$ and $III^2_{k_j,\frac 1m}$}
\\
\\
By \eqref{comp}, we know that $\|\nabla\phi_{k_j,\frac{1}{m}}\|_{L^\infty(\T^n)} \le C\|S_{k_j,\frac1m}\|$. Hence
\begin{align*}
|III^1_{k_j,\frac 1m}| &= \left|\frac{1}{m}\int_{\T^n}\varphi(x)(\dv(A_{k_j})\left(a + \frac{x}{m}\right),\nabla \phi_{k_j,\frac{1}{m}})\dx\right|  \\
&\leq \frac{\|\nabla\phi_{k_j,\frac 1m}\|}{m}\fint_{Q_{\frac 1m}(a)}\varphi\left(m(x - a)\right)\|\dv(A_{k_j})\|(x)dx\\
& \le \frac{C\|S_{k_j,\frac1m}\|}{m}\fint_{Q_{\frac 1m}(a)} \|\dv(A_{k_j})\|(x)dx .
\end{align*}
Recall that we use the notation $\mu_{k}(E) = \int_{E}\|\dv(A_k)\|(x)\dx$, for every Borel set $E \subset \T^n$ and for every $k \in \N$. By weak-* convergence of measures, since $Q_{\frac 1m}(a)$ is a compact set, we have (see \cite[Theorem 1.40]{EVG})
\begin{align*}
\limsup_{j \to \infty}\frac{C}{m}\frac{\mu_{k_j}(Q_{\frac1m}(a))}{(\frac1m)^n} &\le \frac{C}{m}\frac{\mu(Q_{\frac1m}(a))}{(\frac1m)^n} \le \frac{C'}{m}\frac{\mu(B_{\sqrt{2}/m}(a))}{|B_{\sqrt{2}/m}(a)|} \\
&= \frac{C'}{m}\fint_{B_{\sqrt{2}/m}(a))}g(x)\dx + \frac{C'}{m}\frac{\mu^s(B_{\sqrt{2}/m}(a))}{|B_{\sqrt{2}/m}(a)|},
\end{align*}
for some positive constant $C'$. Since we chose $a \in T''$, we get that the previous expression converges to 0 as $m \to \infty$. Finally, by \eqref{imp}, we also know that $$\limsup_{R \to 0^+}\limsup_{j \to \infty}\|S_{k_j,R}\| < + \infty,$$ hence $\limsup_{m \to \infty}\limsup_{j \to \infty}III^1_{k_j,\frac1m} = 0$. The term $III^2_{k_j,\frac1m}$ is completely analogous. 
\\
\\
\indent\fbox{Step 8: $III^3_{k_j,\frac 1m}$}
\\
\\
We finally prove that $\lim_{m \to \infty}\lim_{j \to \infty}III^3_{k_j,\frac1m} = 0$. We have
\begin{align*}
III^3_{k_j,\frac1m} &=\int_{\T^n}\left(A_{k_j}\left(a + \frac{x}{m}\right) -A(a)),H\varphi\right) \phi_{k_j,\frac1m}\dx\\
& =\int_{\T^n}\left(A_{k_j}\left(a + \frac{x}{m}\right) -A(a),H\varphi\right) (\phi_{k_j,\frac1m}- \phi_{\frac1m})\dx\\
&+ \int_{\T^n}\left(A_{k_j}\left(a + \frac{x}{m}\right) -A(a),H\varphi\right) \phi_{\frac1m}\dx.
\end{align*}
The first term can be estimated as
\begin{align*}
\left| \int_{\T^n} \left(A_{k_j}\left(a + \frac{x}{m}\right) -A(a),H\varphi\right) (\phi_{k_j,\frac1m}- \phi_{\frac1m})\dx\right| \\
\leq \|\phi_{k_j,\frac1m}- \phi_{\frac1m}\|_{C^0(\T^n)}\|H\varphi\|_{C^0(\T^n)}\int_{\T^n}\left\|A_{k_j}\left(a + \frac{x}{m}\right) - A(a)\right\|\dx \\
=\|\phi_{k_j,\frac1m}- \phi_{\frac1m}\|_{C^0(\T^n)}\|H\varphi\|_{C^0(\T^n)}m^n\int_{Q_{\frac1m}(a)}\left\|A_{k_j}(x) - A(a)\right\|\dx.
\end{align*}
Since $h_j(x) \doteq \|A_{k_j}(x) - A(a)\|$ is equibounded in $L^p(Q_{\frac1m}(a))$ and by the uniform convergence of $\phi_{k_j,\frac1m}$ to $\phi_{\frac1m}$, we infer that the last term converges to $0$ as $j \to \infty$. On the other hand, by weak $L^p$ convergence,
\[
\int_{\T^n}\left(A_{k_j}\left(a + \frac{x}{m}\right) -A(a),H\varphi\right) \phi_{\frac1m}\dx \to \int_{\T^n}\left(A\left(a + \frac{x}{m}\right) -A(a),H\varphi\right) \phi_{\frac1m}\dx
\]
as $j \to \infty$. Now, since $a$ is a Lebesgue point for $A$, and we can estimate for some constant $\gamma > 0$
\[
\left|\int_{\T^n}\left(A\left(a + \frac{x}{m}\right) -A(a),H\varphi\right) \phi_{\frac1m}\dx\right| \le \gamma\int_{\T^n}\left\|A\left(a + \frac{x}{m}\right) - A(a)\right\|\dx.
\]
By definition of Lebesgue point, the last term converges to $0$ as $m\to \infty$. This concludes the proof that $\lim_{m \to \infty}\lim_{j \to \infty} III_{k_j,\frac{1}{m}} = 0$.
\\
\\
\indent\fbox{Step 9: Conclusion}
\\
\\
Taking the limits in \eqref{equaz}, we achieve
$$
 \langle \nu_a, \det (\cdot)^\frac{1}{n-1}\rangle \int_{\T^n}  \varphi^{\frac{n}{n-1}}(x) \dx\leq \det(A(a))^\frac{1}{n-1}.
$$
By letting the cut-off function $\varphi$ converging to the characteristic function of the torus, we conclude the validity of \eqref{FM_ineq} almost everywhere.
\end{proof}

\begin{rem}\label{ext}
By analyzing the proof, it is moreover clear that one could slightly relax the assumptions of the Theorem. Indeed it would suffice to take a sequence $A_k \rightharpoonup A$ in $L^{\frac{n}{n - 1}}(\T^n)$  and $\dv(A_k) \overset{*}{\rightharpoonup} \dv(A)$ such that the sequence of Radon measures defined by $$\nu_k(E) = \int_E\det(A_k(x))^{\frac{1}{n - 1}} \dx, \quad \forall \text{ Borel } E\subset \T^n$$ weakly- $*$ converges in the sense of measures to a measure $\nu$ that is absolutely continuous with respect to the Lebesgue measure. In this case, calling $f$ the density of $\nu$ with respect to the Lebesgue measure on $\T^n$, one would prove that
\[
f(x) \le \det(A(x))^{\frac{1}{n -1}}\quad \text{ for a.e. }x \in \T^n,
\]
and conclude as in the proof of Theorem \eqref{t_main_sup}. In particular the sequence $\{A_k
\}_k$ does not need to be equibounded in $L^p$ for some $p > \frac{n}{n - 1}$.
 \end{rem}

As a simple consequence of the proof of Theorem \ref{t_main_sup}, we obtain the following 
\begin{Cor}
Let $p > \frac{n}{n - 1}$ and $\{A_k\}_k\subset X_p$ be such that $A_k \rightharpoonup A$ in $X_p$. Suppose further that $ \det (A_k)^\frac{1}{n-1}\rightharpoonup g$ in $L^1(\T^n)$. Then we have
$$
g(x)\leq \det (A(x))^\frac{1}{n-1},
$$
for almost every $x\in \T^n$.
\end{Cor}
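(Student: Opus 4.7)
The plan is to deduce the corollary directly from the pointwise inequality \eqref{FM_ineq}, which is the key ingredient of the proof of Theorem \ref{t_main_sup}, together with the identification of weak $L^1$ limits through Young measures. First, I would pass to a subsequence, still denoted $\{A_k\}_k$, which generates a Young measure $\nu = (\nu_x)_{x \in \T^n}$; the existence of such a subsequence is guaranteed by Theorem \ref{YoungT}, since $\{A_k\}_k$ is bounded in $L^p(\T^n, \Sym^+(n))$.

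Next, observe that the hypothesis $p > \frac{n}{n-1}$ implies that the sequence $\det(A_k)^{\frac{1}{n-1}}$ is equibounded in $L^{\frac{p(n-1)}{n}}(\T^n)$, and since $\frac{p(n-1)}{n} > 1$, this sequence is uniformly integrable, hence relatively weakly compact in $L^1(\T^n)$. Therefore the second part of Theorem \ref{YoungT} applies to $f = \det(\cdot)^{\frac{1}{n-1}}$, giving
\[
\det(A_k)^{\frac{1}{n - 1}} \rightharpoonup \langle \nu_x, \det(\cdot)^{\frac{1}{n - 1}}\rangle \quad \text{in } L^1(\T^n).
\]
By uniqueness of the weak $L^1$ limit, combined with the standing assumption that $\det(A_k)^{\frac{1}{n-1}} \rightharpoonup g$ along the full sequence, we conclude that
\[
g(x) = \langle \nu_x, \det(\cdot)^{\frac{1}{n - 1}}\rangle \quad \text{for a.e. } x \in \T^n.
\]

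Finally, the pointwise inequality \eqref{FM_ineq} established in the proof of Theorem \ref{t_main_sup} yields
\[
\langle \nu_x, \det(\cdot)^{\frac{1}{n - 1}}\rangle \le \det(A(x))^{\frac{1}{n - 1}} \quad \text{for a.e. } x \in \T^n,
\]
which, combined with the identification above, produces the desired bound $g(x) \le \det(A(x))^{\frac{1}{n-1}}$ almost everywhere. There is no real obstacle here since all the hard work has already been done in Theorem \ref{t_main_sup}; the only subtlety is the legitimate passage to a subsequence for the extraction of the Young measure, but this is harmless because the identification $g = \langle \nu_\cdot, \det(\cdot)^{\frac{1}{n-1}}\rangle$ follows from uniqueness of weak limits applied to the full sequence.
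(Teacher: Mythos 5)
Your argument is correct in substance, but it takes a genuinely different route from the paper. The paper does not re-enter the proof of Theorem \ref{t_main_sup}: it fixes a nonnegative cutoff $\varphi\in C^\infty(\T^n)$, notes that $\tilde A_k\doteq\varphi A_k\in X_p$ and $\tilde A_k\rightharpoonup\varphi A$ in $X_p$, and applies the theorem as a black box to this localized sequence; since $\det(\varphi A_k)^{\frac{1}{n-1}}=\varphi^{\frac{n}{n-1}}\det(A_k)^{\frac{1}{n-1}}$, the hypothesis $\det(A_k)^{\frac{1}{n-1}}\rightharpoonup g$ turns $\limsup_k\D(\tilde A_k)\le\D(\varphi A)$ into $\int_{\T^n}g\,\varphi^{\frac{n}{n-1}}\dx\le\int_{\T^n}\det(A)^{\frac{1}{n-1}}\varphi^{\frac{n}{n-1}}\dx$, and arbitrariness of $\varphi$ gives the pointwise bound. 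You instead go back inside the proof: you identify $g$ with $\langle\nu_x,\det(\cdot)^{\frac{1}{n-1}}\rangle$ via equi-integrability (correct, since $\frac{p(n-1)}{n}>1$; for $p=\infty$ the bound is trivial) and uniqueness of weak $L^1$ limits, and then invoke \eqref{FM_ineq}. The identification step is fine, and is in fact the same one the paper uses to pass from \eqref{FM_ineq} to the theorem. The one point you should make explicit is that \eqref{FM_ineq} is actually established in the paper only after the reductions of Remarks \ref{pos} and \ref{smooth}, i.e.\ for smooth sequences with $A_k,A\ge\varepsilon\id_n$; to use it for the Young measure of your original sequence you need to observe that (a) the mollified sequence differs from $A_k$ by a sequence converging to $0$ in $L^p$, hence generates the same Young measure, and (b) adding $\varepsilon\id_n$ replaces $\nu_x$ by its translate, so the reduced-case inequality reads $\langle\nu_x,\det(\cdot+\varepsilon\id_n)^{\frac{1}{n-1}}\rangle\le\det(A(x)+\varepsilon\id_n)^{\frac{1}{n-1}}$, from which the desired inequality follows by monotonicity of the determinant on $\Sym^+(n)$ and letting $\varepsilon\to 0$. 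With those two short observations your proof closes; its advantage is that it exposes the pointwise Young-measure inequality directly, whereas the paper's localization trick buys a proof that uses only the statement of Theorem \ref{t_main_sup} and never has to check that the technical reductions are compatible with the Young measure.
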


\begin{proof}
Fix $\varphi \in C^\infty(\T^n)$ with $\varphi \ge 0$ and note that the sequence $\tilde A_k \doteq \varphi A_k$ is in $X_p$ for every $k$, and $\tilde A_k \rightharpoonup \varphi A$ in $X_p$. Using the hypothesis $\det^{\frac{1}{n - 1}}(A_k) \rightharpoonup g$ and applying Theorem \ref{t_main_sup} to the sequence $\tilde A_k$, we get
\begin{align*}
\int_{\T^n}g(x)\varphi^{\frac{n}{n - 1}}(x)\dx &= \lim_{k}\int_{\T^n}\det(A_k)^{\frac{1}{n - 1}}\varphi^{\frac{n}{n - 1}}(x)\dx  = \lim_k\D(\varphi A_k) \\
&= \lim_k\D(\tilde A_k) \le \limsup_k \D(\tilde A_k) \le \D(\varphi A) = \int_{\T^n}\det(A(x))^{\frac{1}{n - 1}}\varphi^{\frac{n}{n - 1}}(x)\dx.
\end{align*}
Since $\varphi$ was arbitrary, we conclude the proof.
\end{proof}

\section{The case $p \le \frac{n}{n - 1}$}\label{counterS}

In this section we prove the optimality of the assumptions of Theorem \ref{t_main_sup} and Remark \ref{ext}, by providing an explicit counterexample. In particular, we prove the following

\begin{prop}
For every $\varepsilon > 0$ and for every $x_0 \in \R^n$, there exists a sequence of matrix fields $A_k$ such that
\begin{enumerate}[(i)]
\item $A_k$ is compactly supported in $B_\varepsilon(x_0), \forall k \in \N$; \label{support}
\item $A_k\rightharpoonup 0$ in $L^{\frac{n}{n - 1}}(\R^n,\Sym^+(n))$ and strongly in $L^p(\R^n,\Sym^+(n))$, $\forall p < \frac{n}{n - 1}$; \label{convergence}
\item $\dv(A_k) \in \mathcal{M}(\T^n,\R^n)$, $\forall k$ and $\sup_{k \in \N}\|\dv(A_k)\|_{\mathcal{M}(\T^n,\R^n)} \le 1$; \label{divergence}
\item $\D(A_k) = \omega_n$, $\forall k$, so that in particular $\D(0) = 0 < \limsup_k\D(A_k) = \omega_n$;\label{determinant}
\end{enumerate}
\end{prop}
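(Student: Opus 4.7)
The natural approach is to construct a concentrating sequence built from a single profile. Pick a nonnegative cut-off $\phi \in C_c^\infty(B_1(0))$ (to be normalized shortly), and for a sequence $r_k \searrow 0$ with $r_k < \varepsilon$ define
\[
A_k(x) \doteq r_k^{-(n-1)}\,\phi\!\left(\tfrac{x-x_0}{r_k}\right)\id_n.
\]
The exponent $-(n-1)$ is precisely the critical scaling that keeps $\D$ invariant under concentration, and it places $A_k$ exactly in the borderline $L^{n/(n-1)}$ regime—this is why one expects the threshold $p = n/(n-1)$ to be sharp.

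The verification of \eqref{support}--\eqref{determinant} reduces to the change of variables $y = (x-x_0)/r_k$. Item \eqref{support} is immediate since $\mathrm{supp}(A_k) \subset B_{r_k}(x_0)$. For \eqref{convergence}, a direct calculation gives $\|A_k\|_{L^p}^p = r_k^{n-(n-1)p}\,|\id_n|^p\int \phi^p\,dy$, so the exponent is strictly positive for $p < n/(n-1)$ (giving strong convergence to $0$) and vanishes at $p = n/(n-1)$ (giving only a uniform bound). Weak convergence to $0$ in $L^{n/(n-1)}$ then follows from the collapse of supports: for any $\Psi \in L^n(\R^n, \R^{n\times n})$, by H\"older,
\[
\left|\int(A_k,\Psi)\,dx\right| \le \|A_k\|_{L^{n/(n-1)}}\,\|\Psi\|_{L^n(B_{r_k}(x_0))} \longrightarrow 0
\]
by absolute continuity of the integral. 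For \eqref{divergence}, one computes $\dv A_k(x) = r_k^{-n}(\nabla\phi)((x-x_0)/r_k)$, so the same substitution yields $\|\dv A_k\|_{\mathcal{M}} = \int |\nabla \phi|\,dy$, a constant depending only on $\phi$. For \eqref{determinant}, analogously $\D(A_k) = \int \phi^{n/(n-1)}\,dy$ is likewise constant in $k$.

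The remaining task is only to fix $\phi$ so that the two resulting constants are $\omega_n$ and at most $1$ respectively: normalizing $\int\phi^{n/(n-1)}\,dy = \omega_n$ secures \eqref{determinant}, and the prescribed bound on $\|\nabla\phi\|_{L^1}$ can be arranged by a suitable dimensional rescaling (any uniform bound on the divergence paired with any positive lower bound on $\D(A_k)$ already contradicts upper semi-continuity, since $\D(0)=0$ while $A_k\rightharpoonup 0$). The essential subtlety of the argument is not the scaling arithmetic but the weak-$L^{n/(n-1)}$ convergence at the critical exponent, which cannot be extracted from the uniform $L^{n/(n-1)}$ bound alone and genuinely requires the shrinking-support H\"older estimate above.
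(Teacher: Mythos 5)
Your construction is essentially the one in the paper: there the profile is the extreme member of your family, $A_k=2^{k(n-1)}\chi_{B_{2^{-k}}(x_0)}\,\id_n$, i.e.\ the unsmoothed characteristic-function profile at the same critical scaling, and items (i), (ii), (iv) are verified by exactly the change-of-variables and shrinking-support H\"older arguments you give; for (iii) the divergence is the BV derivative of $f_k$, a surface measure on $\partial B_{2^{-k}}(x_0)$ of constant total mass, in place of your absolutely continuous $r_k^{-n}(\nabla\phi)\bigl(\tfrac{x-x_0}{r_k}\bigr)\dx$. Mollifying the profile changes nothing of substance, and your scaling computations for (i)--(iv) are correct.

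The one inaccurate point is the final normalization: it cannot, in general, ``be arranged by a suitable dimensional rescaling.'' The concentration rescaling $\phi\mapsto\lambda^{n-1}\phi(\lambda\,\cdot)$ leaves both $\int\phi^{\frac{n}{n-1}}\dy$ and $\int|\nabla\phi|\dy$ unchanged, while an amplitude rescaling $\phi\mapsto c\phi$ multiplies them by $c^{\frac{n}{n-1}}$ and $c$ respectively; by the $L^1$-Sobolev (isoperimetric) inequality $\|\phi\|_{L^{\frac{n}{n-1}}}\le \bigl(n\omega_n^{1/n}\bigr)^{-1}\|\nabla\phi\|_{L^1}$, the two requirements $\int\phi^{\frac{n}{n-1}}\dy=\omega_n$ and $\|\nabla\phi\|_{L^1}\le 1$ together force $n\omega_n\le 1$, which fails e.g.\ for $n=2,3$. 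This is harmless for the purpose of the proposition, as your parenthetical remark correctly indicates: any sequence with $\D(A_k)$ equal to a fixed positive constant, uniformly bounded divergence masses and $A_k\rightharpoonup 0$ already defeats upper semicontinuity, so one should simply replace the constants $1$ and $\omega_n$ in (iii)--(iv) by dimensional constants (or divide $A_k$ by $\|\nabla\phi\|_{L^1}$ and accept $\D(A_k)=c_n$). It is worth noting that the paper's own verification has the same slip: for its choice, $\|\dv(A_k)\|_{\mathcal{M}(\T^n,\R^n)}=2^{k(n-1)}\,\mathcal{H}^{n-1}\bigl(\partial B_{2^{-k}}(x_0)\bigr)=n\omega_n$, so the displayed bound by $\|\Phi\|_{C^0}$ drops the factor $n\omega_n$; stating the exact constants as in (iii)--(iv) is therefore not literally achieved there either.
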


\begin{proof}
Fix a point $x_0 \in \R^n$ and consider
\[
f_k(x) \doteq 2^{k(n - 1)}\chi_{B_{2^{-k}}(x_0)}.
\]
Define $A_k(x) \doteq f_k(x)\id_n$. First we note that $\spt(A_k) \subset B_{2^{-k}}(x_0)$, $\forall k$, so that once $\varepsilon > 0$ is fixed, we can pick $k_0$ such that if $k \ge k_0$, then $(\ref{support})$ is fulfilled by choosing as a sequence $\{A_{k + k_0}\}_{k \in \N}$. Now note that the H\"older conjugate exponent of $\frac{n}{n - 1}$ is $n$. Hence, to see $(\ref{convergence})$, we compute for any $\varphi \in L^{n}(\R^n)$
\begin{equation}\label{1}
\left|\int_{\R^n}f_k(x)\varphi(x)\dx\right| = 2^{k(n-1)}\left|\int_{B_{2^{-k}(p)}}\varphi(x)\dx\right| \le \left(\int_{B_{2^{-k}(p)}}|\varphi|^n(x)\dx\right)^{\frac{1}{n}} \to 0, \text{ as }k\to \infty
\end{equation}
and, if $1 \le p < \frac{n}{n - 1}$,
\begin{equation}\label{1}
\|f_k\|_{L^{p}(\R^n)}^{p} = 2^{k(n-1)}2^{-k\frac{n}{p}} = 2^{-k(\frac{n}{p} - n + 1)}.
\end{equation}
The last expression converges to $0$ as $k \to \infty$ if $p < \frac{n}{n - 1}$, thus proving $(\ref{convergence})$. We turn to $(\ref{divergence})$. We observe that
\[
\dv(A_k) = Df_k,
\]
where $Df_k$ is the BV derivative of $f_k$. To compute it, we use the definition. For every $\Phi \in C^\infty(\T^n,\R^n)$ and for every $k \in \N$,
\[
\int_{\R^n} f_k(x)\dv(\Phi(x))\dx = 2^{k(n-1)}\int_{B_{2^{-k}}(x_0)}\dv(\Phi(x))\dx = 2^{k(n-1)}\int_{\partial B_{2^{-k}}(x_0)}(\Phi(z),\nu_k(z)) d\sigma(z),
\]
where $\nu_k(z) = \frac{z - x_0}{\|z - x_0\|}$ is the normal to $ \partial B_{2^{-k}}(x_0)$. The previous expression can be bounded with
\[
\left|\int_{\T^n} f_k(x)\dv(\Phi(x))\dx\right| \le \|\Phi\|_{C^0},
\]
hence also $(\ref{divergence})$ is fulfilled. Finally, we prove $(\ref{determinant})$:
\[
\int_{\R^n}\det(A_k(x))^{\frac{1}{n - 1}}\dx = \int_{\R^n}f_k^{\frac{n}{n - 1}}(x)\dx = \int_{B_{2^{-k}}}\left(2^{k(n-1)}\right)^{\frac{n}{n - 1}}\dx = \omega_n, \quad \forall k.
\]
This concludes the proof.
\end{proof}

\bibliographystyle{plain}
\bibliography{USC}

\end{document}